\newtheorem{thrm}{Theorem}[section]
\theoremstyle{definition}
\newtheorem{remark}[thrm]{Remark}
\numberwithin{equation}{section}
\author{Victor Alexandrov}
\address{
Sobolev Institute of Mathematics\\
Koptyug ave., 4\\
Novosibirsk, 630090, Russia \&
Department of Physics\\
Novosibirsk State University\\
Pirogov str., 2\\
Novosibirsk, 630090, Russia}
\email{alex@math.nsc.ru}
\thanks{The author is supported in part by the 
Russian Foundation for Basic Research 
(project 10--01--91000--anf) 
and the State Maintenance Program
for Young Russian Scientists and 
the Leading Scientific Schools of the
Russian Federation (grant NSh--921.2012.1)}
\keywords{Dihedral angle, 
flexible polyhedron, hyperbolic space, spherical space,
tessellation}
\subjclass{Primary 52C25, Secondary 52B70; 52C22; 51M20; 51K05}
\begin{document}

\title[Continuous deformations of polyhedra]{Continuous deformations of
polyhedra that do not alter the dihedral angles}

\begin{abstract}
We prove that, both in the hyperbolic and spherical 3-spaces,
there exist nonconvex compact boundary-free polyhedral surfaces 
without selfintersections which admit nontrivial continuous 
deformations preserving all dihedral angles and study 
properties of such polyhedral surfaces.
In particular, we prove that the volume of the domain, 
bounded by such a polyhedral surface, is necessarily 
constant during such a deformation while, for some 
families of polyhedral surfaces, the surface area, 
the total mean curvature, and the Gauss curvature 
of some vertices are nonconstant during deformations
that preserve the dihedral angles.
Moreover, we prove that, in the both spaces, 
there exist tilings that possess nontrivial
deformations preserving the dihedral angles of every tile
in the course of deformation.
\end{abstract}
\maketitle

\section{Introduction} 
We study polyhedra (more precisely, boundary-free 
compact polyhedral surfaces) the spatial shape of which 
can be changed continuously in such a way that all dihedral 
angles  remain constant. 

These polyhedra may be considered as a natural
`dual object' for the flexible polyhedra.
The latter are defined as polyhedra whose spatial shape can 
be changed continuously due to changes of their dihedral 
angles only,  i.\,e., in such a way that every face remains 
congruent to itself during the flex.
Since 1897, it was shown that  flexible polyhedra do exist 
and have numerous nontrivial properties. 
Many authors contributed to the theory of 
flexible polyhedra, first of all we should mention
R. Bricard, R. Connelly, I.Kh. Sabitov, 
H. Stachel, and A.A. Gaifullin.
For more details, the reader is referred to
the survey article \cite{Sa11} and references given there.

In 1996, M.Eh. Kapovich brought our attention to 
the fact that polyhedra, admitting nontrivial deformations
that keep all dihedral angles fixed, may be of some interest 
in the theory of hyperbolic manifolds, where Andreev's 
theorem \cite{An70} plays an important role. The latter reads 
that, under the restriction that the dihedral angles must be 
nonobtuse, a compact convex hyperbolic polyhedron is uniquely 
determined by its dihedral angles.

The case of the Euclidean 3-space is somewhat special and
we do not study it here.
The reader may consult \cite{MM11} and references given there
to be acquainted with the progress in solving old conjectures 
about unique determination of Euclidean polytopes by their 
dihedral angles that may be dated back to J.J. Stoker's paper~\cite{St68}.

Coming back to continuous deformations of hyperbolic or spherical
polyhedra which leave the dihedral angles fixed, we can 
immediately propose the following example. 
Consider the boundary $P$ of the union of a convex polytope $Q$
and a small tetrahedron $T$ (the both are treated as solid 
bodies for a moment) located so that 
$(i)$ a face $\tau$ of $T$ lies inside a face of $Q$ and 
$(ii)$ $T$ and $Q$ lie on the different sides of the plane 
containing $\tau$.

Obviously, the nonconvex  compact polyhedron $P$ 
has no selfintersections and admits nontrivial 
(i.e., not generated by a rigid motion of the whole space) 
continuous deformations preserving all dihedral angles. 
In order to construct such a deformation 
we can keep $Q$ fixed and continuously move (e.g., rotate) 
$T$ in such a way that the conditions $(i)$ and $(ii)$ are 
satisfied.\footnote{If the reader prefers to deal 
with polyhedra with simply connected faces only, 
he can triangulate the face $P\cap\tau$ 
without adding new vertices. In this case, the movement
of $T$ should be small enough so that
no triangle of the triangulation becomes degenerate
during the deformation.}
In this example, many quantities associated with $P$
remain constant. To name a few, we can mention
\begin{enumerate}[(i)]
\item  the volume;
\item  the surface area;
\item  the Gauss curvature of every vertex (i.e., 
the difference between $2\pi$ and the sum of all plane
angles of $P$ incident to this vertex);
\item  the total mean curvature of $P$ (i.e., the sum
\begin{equation}\label{eq1}
\frac12\sum\limits_{\ell} \bigl(\pi-\alpha(\ell)\bigr)|\ell|
\end{equation}
calculated over all edges $\ell$ of $P$, where $\alpha(\ell)$
stands for the dihedral angle of $P$ attached to $\ell$ and
$|\ell|$ for the length of $\ell$);
\item  every separate summand 
$\bigl(\pi-\alpha(\ell)\bigr)|\ell|$
in~\eqref{eq1}.
\end{enumerate}

In the hyperbolic  
and spherical 3-spaces, 
we study whether the above example provides us
with  the only possibility
to construct nonconvex compact polyhedra
that admit nontrivial continuous deformations 
preserving all dihedral angles
and prove that the answer is negative.

We study also what quantities 
associated with nonconvex compact polyhedra
necessarily remain constant 
during such deformations 
and show that the volume of the domain bounded by the
polyhedral surface is
necessarily constant, while the surface area, 
the total mean curvature, and
the Gauss curvature of a vertex
may be nonconstant.

At last, we 
prove that there exist tilings 
that possess nontrivial deformations 
leaving the dihedral angles of every tile fixed
in the course of deformation.
Here we use a construction originally proposed (but never 
published) in the 1980th by A.V. Kuz'minykh at the 
geometry seminar of A.D. Alexandrov in Novosibirsk, Russia,
which was proposed for the study of a similar problem for
flexible polyhedra. 

\section{Polyhedra in the hyperbolic  3-space} 

\begin{thrm}\label{t1}
In the hyperbolic 3-space $\mathbb{H}^3$,
there exists a nonconvex sphere-homeo\-morphic 
polyhedron $P$ with the following properties:
\begin{enumerate}[(i)]
\item $P$ has no selfintersections;
\item $P$ admits a continuous family of 
nontrivial deformations which leave the dihedral 
angles fixed;
\item the surface area of $P$ is nonconstant
during the deformation;
\item the total mean curvature  of $P$ is nonconstant
during the deformation;
\item the Gauss curvature of some vertex of $P$
is nonconstant.
\end{enumerate}
\end{thrm}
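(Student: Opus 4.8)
The plan is to build an explicit polyhedron that is genuinely more interesting than the trivial ``tetrahedron-on-a-face'' example, so that moving one part relative to another forces the plane angles and edge lengths at the hinge to change even though the dihedral angles stay fixed. The key mechanism I would exploit is that, unlike in Euclidean space, in $\mathbb{H}^3$ the plane angles of a geodesic triangle are \emph{not} determined by its dihedral data alone once we allow the triangle to slide, and the length of a segment cut out on a fixed plane generically varies as the cutting configuration moves. So I would take a rigid convex ``base'' block $Q$ and attach to one of its faces a small ``cap'' $T$ whose free vertex is allowed to travel along a one-parameter family of positions, chosen so that the combinatorics and all the dihedral angles of the resulting nonconvex surface $P$ are preserved, while the metric quantities listed in (iii)--(v) are not.

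Concretely, I would first fix the combinatorial type of $P$: a convex polytope $Q$ with a distinguished face $F$, and a cap built over a region of $F$, so that the edges of $P$ fall into three groups --- edges interior to $Q$ (unaffected), edges of the attaching curve in the plane of $F$, and the lateral edges of the cap. The deformation parameter $t$ would control the position of the cap's apex. The first step is to verify (i): for small deformations the surface stays embedded, since $Q$ is fixed and the cap stays in a small neighbourhood on the correct side of $F$; this is a routine open-condition argument. The second step, (ii), is to arrange the motion so that every dihedral angle is literally constant. Here I would engineer the cap so that its motion is an isometry of the plane of $F$ restricted to the attaching data in a way that leaves the hinge angles along the attaching curve fixed, while the internal dihedral angles of the cap are preserved because the cap moves rigidly enough in those directions --- the point being that in $\mathbb{H}^3$ one has extra freedom to trade plane angles against lengths without touching dihedral angles.

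The heart of the matter, and the hardest part, is to exhibit a single explicit motion for which (iii), (iv) and (v) all hold simultaneously --- i.e.\ to check that at least one of the surface area, the total mean curvature $\tfrac12\sum_\ell(\pi-\alpha(\ell))|\ell|$, and some vertex's Gauss curvature genuinely varies, given that all the $\alpha(\ell)$ are frozen by construction. Since the dihedral angles are constant, the total mean curvature \eqref{eq1} changes if and only if some edge length $|\ell|$ changes, so the crux reduces to producing a hyperbolic configuration in which a cap can be slid keeping all dihedral angles fixed while the lengths of its lateral edges (and hence the plane angles at its apex, giving (v)) strictly vary. The plan is therefore to write down coordinates in a concrete model of $\mathbb{H}^3$ (the hyperboloid or upper half-space model), parametrise the apex of the cap by $t$, impose the dihedral-angle constraints as a system of equations in $t$, and then compute the edge lengths and vertex angle as functions of $t$ and differentiate at $t=0$ to see that they are nonconstant. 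I expect the main obstacle to be the bookkeeping of verifying that the dihedral-angle constraints really do admit a one-parameter solution family rather than isolated points; once a nondegenerate such family is produced, the nonconstancy of the metric quantities should follow from a direct derivative computation, and (iii) on the area would follow from the same configuration since the cap's faces change shape as the apex moves.
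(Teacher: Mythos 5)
Your proposal identifies the right target (a one-parameter family of shapes with every dihedral angle frozen but with varying metric data), but it stops exactly where the real difficulty begins: you never exhibit, or prove the existence of, the one-parameter family itself. Saying that you will ``impose the dihedral-angle constraints as a system of equations in $t$'' and that the main obstacle is ``verifying that the dihedral-angle constraints really do admit a one-parameter solution family rather than isolated points'' is an accurate description of the problem, not a solution of it; for a generic cap over a polygon the angle constraints do cut out isolated points (a cone over a triangle, for instance, is determined up to isometry by its three lateral dihedral angles, since its link is a spherical triangle and spherical triangles are determined by their angles). The paper supplies the missing mechanism explicitly: the link of the deforming cone is taken to be a spherical \emph{antiparallelogram} circumscribed about a circle of variable radius $r$, chosen in a family with all four angles independent of $r$ (the analogue of a Bricard type II octahedron); the circumscribed condition is what guarantees that a single plane orthogonal to the axis cuts all four faces of the cone at one common dihedral angle $\beta$, and the hyperbolic relation $\cos\beta=\cosh s(r)\sin r$ is what lets the cutting distance $s(r)$ be adjusted so that $\beta$ stays constant while the piece genuinely changes size. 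The resulting half-octahedron is then grafted into a large tetrahedron by Connelly's trick to remove the selfintersections. Without an idea of this kind your ``cap'' has no reason to admit any nontrivial angle-preserving motion at all.

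A second, smaller gap: even granted a family with constant dihedral angles, your arguments for (iii) and (iv) are not conclusive. ``The cap's faces change shape'' does not imply that the total area varies, and ``some edge length changes'' does not imply that the signed sum $\sum_\ell\bigl(\pi-\alpha(\ell)\bigr)\frac{d}{dt}|\ell|$ is nonzero, since contributions from different edges can cancel. The paper avoids both issues by a limiting argument: the inserted piece contributes a strictly positive excess to the area and to the total mean curvature, that excess tends to $0$ as $r\to(\pi/2-\beta)-0$, and by analyticity in $r$ the quantities are therefore nonconstant on every parameter interval.
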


\begin{proof}
We divide the proof in a few steps.

\textit{Step I}: Let $S$ be a unit 2-sphere in $\mathbb{H}^3$,
$C\subset S$ be a circle on the 2-sphere, 
and $a\in S$ be a point lying
outside the convex disk on $S$ bounded by $C$.
Draw two geodesic lines $L^1$ and $L^2$ on $S$ 
that pass through $a$ and are tangent to $C$ (see Figure~\ref{fig1}). 
Now rotate the set $L^1\cup L^2$ to an arbitrary 
angle around the center of $C$ and denote the image of
$L^j$ by $\overline{L^j}$, $j=1,2$. 
Let $c$ be the image of the point $a$ under the above rotation,
$b=L^1\cap \overline{L^1}$, and $d=L^2\cap \overline{L^2}$. 
The resulting  configuration is schematically shown
on Figure~\ref{fig1}.
\begin{figure}\label{fig1}
\begin{center}
\includegraphics[width=0.5\textwidth]{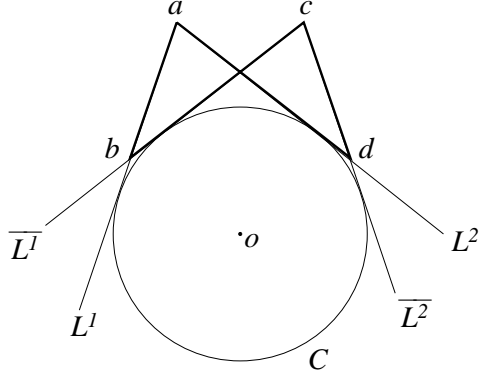}
\end{center}
\caption{Constructing a spherical antiparallelogram $abcd$}
\end{figure}

As a result, we get an antiparallelogram $abcd$ on $S$
that is circumscribed around the circle $C$.
We call the selfintersecting quadrilateral $abcd$ 
the antiparallelogram because its opposite sides
have equal lengths and we say that it is circumscribed 
around the circle $C$ because its sides lie on the lines that
are tangent to $C$.

\textit{Step II}: Let $C$ be the circle from Step I, 
$o\in S$ be the center of $C$, and $r_C$ be the radius of $C$. 
Consider a continuous family of circles
$C_r\subset S$ such that 
\begin{enumerate}[(i)]
\item $o$ is the center of $C_r$ for every $r$;
\item $C_r$ has radius $r$ for every $r$;
\item the circle $C$ belongs to this family.
\end{enumerate}

For every $r$, we repeat Step I.
More precisely,  we first select a continuous family of 
points $a_r$ such that the angle between the two (geodesic) 
lines $L^1_r$ and $L^2_r$ on $S$, that pass through $a_r$ 
and are tangent to $C_r$, is equal to the angle between the 
two (geodesic) lines on $S$ that pass through $a_{r_C}$ and 
are tangent to the circle~$C=C_{r_C}$. 
Then we  select such a rotation of the set $L^1_r\cup L^2_r$ 
around the center of $C_r$ that the angle between the lines
$L^1_r$  and $\overline{L^1_r}$ is equal to the angle between the lines
$L^1=L^1_{r_C}$ and $\overline{L^1}=\overline{L^1_{r_C}}$.
Here $\overline{L^j_r}$, $j=1,2$, stands for the image of the line
$L^j_r$ under the rotation. 
At last, let $c_r$ be the image of the point $a_r$ under 
the above rotation, $b_r=L^1_r\cap \overline{L^1_r}$, and 
$d_r=L^2_r\cap \overline{L^2_r}$. 

As a result, we obtain a continuous
family of antiparallelograms $a_rb_rc_rd_r$ on $S$
such that, for every $r$, $a_rb_rc_rd_r$ is  
circumscribed around the circle $C_r$
and has the same angles as~$abcd$.
The fact that the 
angles at the vertices $a_r$, $b_r$, and $c_r$ 
are equal to the corresponding angles of the
antiparallelogram  $abcd$ at the vertexes $a$, $b$, and $c$,
obviously, holds true by construction.
Due to the symmetry of
$a_rb_rc_rd_r$, the angle at the vertex $d_r$ is equal 
to the angle at the vertex $b_r$ and, thus, is equal to the
angle of the antiparallelogram  $abcd$ at the vertices $b$ and~$d$.

\textit{Step III}: Let  $a_rb_rc_rd_r$ be an antiparallelogram
constructed in Step II that lies on the sphere $S$,
is circumscribed around the circle $C_r$ of radius $r$,
depends continuously on $r$, and whose angles at the 
vertices $a_r$, $b_r$, $c_r$, and $d_r$ are 
independent of $r$.

Consider an infinite cone $K_r$ 
over the antiparallelogram $a_rb_rc_rd_r$ 
with apex at the center $O$ of the sphere $S$.
Draw a plane $\pi(s)$ in the 3-space that is perpendicular
to the line joining $O$ with the center $o$
of the circle $C_r$ and such that  $\pi(s)$
is at distance $s$ from $O$.
We claim that there is a continuous function $s(r)$ such that
the value  $\beta$ of the dihedral angle between the plane 
$\Pi_r$ of the triangle $a_rb_rO$ and the plane  $\pi(s(r))$  
is independent of $r$.

In fact, this statement immediately follows from 
the trigonometric relations for hyperbolic
right triangles. 
Consider the hyperbolic right triangle $Opq$, where
$p$ stands for the nearest point of the plane  $\pi(s(r))$
to the point $O$ and $q$ stands for the nearest point 
of the line  $\pi(s(r))\cap\Pi_r$ to the point $O$.
Then 
\begin{equation}\label{eq2}
\cos\beta = \cosh s(r) \sin r, 
\end{equation}
see, e.g., \cite[formula (3.5.16)]{Ra06}.
Using this equation, we find $s(r)$.

Since the antiparallelogram $a_rb_rc_rd_r$ 
is circumscribed around the circle $C_r$,
it follows that the dihedral angle between 
the plane $\pi(s(r))$ and each of the planes 
containing one of the triangles $b_rc_rO$, 
$c_rd_rO$, and $a_rd_rO$, is equal to $\beta$ and, 
in particular, is independent of~$r$.

\textit{Step IV}: Consider a continuous family of cones  $K_r$
constructed in Step III. 
Let $\sigma_r$ be the half-space determined by the plane 
$\pi(s(r))$ such that $O\in \sigma _r$.
By definition, put $K^+_r=K_r\cap\sigma_r$.
Let $K^-_r$ be obtained from $K^+_r$ by reflecting it
in the plane $\sigma_r$ and let $M_r=K^+_r\cup K^-_r$.

For every $r$, $M_r$ is a boundary-free polyhedral
surface with selfintersections that is combinatorially 
equivalent to the surface of the regular octahedron.

The reader familiar with the theory 
of flexible polyhedra \cite{Sa11}
can observe that the construction of $M_r$ has very much 
in common with the construction of the Bricard octahedra of
type~II. 

In the next Step we
will finalize the construction of a selfintersection-free
polyhedron, whose existence is proclaimed in Theorem \ref{t1},
using the trick that was originally proposed by
R. Connelly in the construction of his famous
selfintersection-free flexible polyhedron \cite{Co77}.

\textit{Step V}: Let $\widetilde{a}_r$ be the point of
the intersection of the line $a_rO$ and the plane $\pi(s(r))$.
Similarly we define the points $\widetilde{b}_r$,
$\widetilde{c}_r$, and $\widetilde{d}_r$.
Let $\widetilde{O}_r$ be symmetric to the point $O$
with respect to the plane $\pi(s(r))$.
Note that the points $O$, $\widetilde{a}_r$, $\widetilde{b}_r$,
$\widetilde{c}_r$, $\widetilde{d}_r$, and 
$\widetilde{O}_r$ are the vertices of the polyhedron $M_r$.

Observe that, if we remove the triangles 
$\widetilde{a}_r\widetilde{d}_rO$ and
$\widetilde{a}_r\widetilde{d}_r\widetilde{O}_r$
from the polyhedron $M_r$, we get a disk-homeomorphic
selfintersection-free polyhedral surface.
Denote it by $N_r$.

Recall that the dihedral angle between the triangles
$\widetilde{a}_r\widetilde{d}_rO$ and
$\widetilde{a}_r\widetilde{d}_r\widetilde{O}_r$
is equal to $2\beta$, where
$\beta$ is the dihedral angle between the plane 
$\Pi_r$ of the triangle $a_rb_rO$ and the plane  $\pi(s(r))$  
constructed in Step III.
In particular, this dihedral angle is independent of $r$.

Now, consider a tetrahedron $T=WXYZ$ such that $T$ is 
sufficiently large in comparison with the dimensions 
of the polyhedron $N_r$ and the dihedral angle of $T$, 
attached to the edge $XY$, is equal to $2\beta$.
Let us select the points $x_r, y_r, z_r$ and $w_r$ such that
\begin{enumerate}[(i)]
\item $x_r$ lies on the geodesic segment $XY$ 
sufficiently far from its end-points and
depends continuously on $r$;
\item $y_r$ lies on the geodesic segment $XY$,
depends continuously on $r$, and the distance between 
$x_r$ and $y_r$
is equal to the distance between $O$ and $\widetilde{O}$;
\item $z_r$ belongs to the triangle $XYZ$
and its distances from the points $x_r$ and $y_r$ are
equal to the distance between the points 
$\widetilde{a}_r$ and $O$
(and, thus between the points 
$\widetilde{a}_r$ and $\widetilde{O}_r$);
\item $w_r$ belongs to the triangle $XYW$
and its distances from the points $x_r$ and $y_r$ are
equal to the distance between the points 
$\widetilde{d}_r$ and $O$.
\end{enumerate}

From the fact that the dihedral angle of $T$, 
attached to the edge $XY$, is equal to $2\beta$ and
the conditions (ii)--(iv), it obviously follows that
there is an isometry $\varphi$  of the 
hyperbolic 3-space such that 
$\varphi(\widetilde{O}_r)=x_r$,
$\varphi(O)=y_r$,
$\varphi(\widetilde{d}_r)=z_r$, and
$\varphi(\widetilde{a}_r)=w_r$.

For every $r$,
let us remove the triangles $x_ry_rz_r$ and
$x_ry_rw_r$ from the polyhedral surface $T$
and replace the union of these triangles 
by the polyhedral surface $\varphi(N_r)$, see Figure~\ref{fig2}. 
Denote the resulting polyhedral surface by $P_r$.
We may also describe this transformation of $T$ into $P_r$
as follows: first, we produce a quadrilateral
hole on the polyhedral surface $T$ and, second, we glue
this hole with an isometric copy of the polyhedral
surface $N_r$.
\begin{figure}\label{fig2}
\begin{center}
\includegraphics[width=0.7\textwidth]{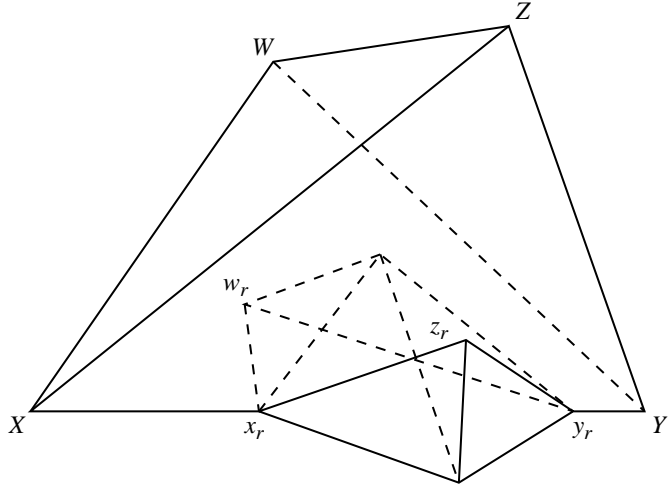}
\end{center}
\caption{Polyhedral surface $P_r$}
\end{figure}

Obviously, for some open interval $I\subset\mathbb R$, 
the family $\{P_r\}_{r\in I}$ is a continuous family
of nonconvex sphere-homeomorphic selfintersec\-tion-free
polyhedral surfaces such that every dihedral angle of $P_r$
is independent of~$r$. 

In the rest part of the proof we show that
all the statements of Theorem \ref{t1} are fulfilled for 
any polyhedron $P=P_r$, $r\in I$, i.e., that, as we vary $r$,  
the deformation of $P_r$ is nontrivial, the surface area 
and total mean curvature of $P_r$ as well as the Gauss 
curvature of some vertex of $P_r$ are nonconstant in~$r$.

\textit{Step VI}: In order to prove that the above constructed
deformation of the polyhedral surface $P_r$ is nontrivial,
it is sufficient to prove that the (spatial) distance 
between the points $x_r$ and $y_r$ is not constant in~$r$.

Observe that this distance is equal to $2s(r)$, where
the function $s(r)$ is defined in Step III as the distance from
the point $O$ to a plane. 
In particular, the function $s(r)$ satisfies the equation~\eqref{eq2}
and, obviously, is nonconstant on every interval of the reals.

Thus, the deformation of the polyhedral surface $P_r$ 
is nontrivial. 

\textit{Step VII}: Let's prove that the surface area 
of $P_r$ is not constant
for $r\in I$.

According to Step V, for $r\in I$,
$P_r$ is obtained from $T$ by
replacing the union of triangles $x_ry_rz_r$ and
$x_ry_rw_r$ with an isometric copy of the polyhedral
surface $N_r$. 
The latter consists of the triangle 
$\widetilde{O}_rO\widetilde{d}_r$ 
(which is isometric to the triangle $x_ry_rz_r$), 
the triangle $\widetilde{O}_rO\widetilde{a}_r$
(which is isometric to the triangle $x_ry_rw_r$), 
and four mutually isometric triangles
(each of which is isometric to the triangle 
$O\widetilde{a}_r\widetilde{b}_r$) whose 
surface area $S_r$ is positive.
Hence, the surface area of $P_r$ exceeds the 
surface area of $T$ by the strictly positive
number $4S_r$.

Using the notation of Step III, we can say that
the distance between the points $x_r$ and $y_r$
is equal to $2s(r)$ and satisfies the equation~\eqref{eq2},
namely, $\cos\beta = \cosh s(r) \sin r$,
where $2\beta$ stands for the dihedral
angle of $P_r$ at the edge $\widetilde{a}_r\widetilde{b}_r$.
Taking into account that $\beta$ is independent of $r$,
pass to the limit in the equation~\eqref{eq2} as
$r\to (\pi/2-\beta)-0$. As a result we get $s(r)\to 0$.
Moreover, if we consider the hyperbolic right triangle 
$Opq$ from Step III, we conclude that the distance
between every pair of the vertices of the polyhedral
surface $N_r$ tends to zero as $r\to (\pi/2-\beta)-0$.
Hence, for all values of the parameter $r$ that
are less than $\pi/2-\beta$ but sufficiently close
to $\pi/2-\beta$, the surface area of $P_r$ is
arbitrarily close to the surface area of $T$.

So, we see that 
the difference of the surface area of $P_r$ 
and the surface area of $T$ is strictly positive for every
$r\in I$ and tends to zero as $r\to (\pi/2-\beta)-0$.
Hence, the surface area of $P_r$ is nonconstant.
As far as this function is analytic in $r$,
it is nonconstant on every interval, in particular, on $I$.
Thus the surface area of $P_r$ is nonconstant
for $r\in I$.

\textit{Step VIII}: The proof of the fact that 
the total mean curvature 
of the polyhedral surface $P_r$ is nonconstant
for $r\in I$ is similar to Step VII.
More precisely, we observe that, for a given $r\in I$,
the total mean curvature of $P_r$ is strictly
greater than the total mean curvature of $T$ but
their difference tends to zero as $r\to (\pi/2-\beta)-0$
and, thus, is nonconstant.

\textit{Step IX}: Here we prove that the Gauss
curvature of the vertex $y_r\in P_r$ is nonconstant in $r$. 

Recall that the Gauss curvature of the vertex $y_r$
is equal to the difference between $2\pi$ and the sum of all 
plane angles of $P$ incident to $y_r$.

Using the notation introduced in Step V and the fact that
$P_r$ is obtained from $T$ by
replacing the union of the triangles $x_ry_rz_r$ and
$x_ry_rw_r$ with an isometric copy of the polyhedral
surface $N_r$, 
we observe that the Gauss curvature of $P_r$ at 
the vertex $y_r$ is equal to 
$-2\angle\widetilde{a}_rO\widetilde{b}_r=
-2a_rb_r$,
where $\angle\widetilde{a}_rO\widetilde{b}_r$ stands for the 
angle of the triangle $\widetilde{a}_rO\widetilde{b}_r$ attached 
to the vertex $O$ and $a_rb_r$ stands for the spherical
distance between the points $a_r$, $b_r$ on the unit
sphere.
This means that all we need is to prove that $a_rb_r$
is nonconstant in $r$.

Arguing by contradiction,
suppose we have two quadrilaterals 
$a_rb_rc_rd_r$ and $a_tb_tc_td_t$, 
constructed according to Step I, 
such that $a_rb_r=a_tb_t$. 
Since the corresponding angles of 
$a_rb_rc_rd_r$ and $a_tb_tc_td_t$ are equal to each other,
we conclude that these quadrilaterals are mutually congruent.
Hence, their circumradii are
equal to each other, i.e., $r=t$.
Thus, $a_rb_r\neq a_tb_t$ for $r\neq t$ and the Gauss
curvature of the vertex $y_r\in P_r$ is nonconstant in~$r$. 
This completes the proof of Theorem~\ref{t1}.
\end{proof}

\begin{thrm}\label{t2}
For every compact bounary-free 
oriented polyhedral surface $P$
in the hyperbolic  3-space and every smooth
deformation that leaves the dihedral angles of $P$ fixed,
the volume bounded by $P$ remains constant 
in the course of deformation.
\end{thrm}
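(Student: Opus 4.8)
The plan is to reduce everything to the classical Schl\"afli differential formula. In the hyperbolic $3$-space this formula asserts that, for any smooth one-parameter family of (generalized) polyhedra,
$$dV=-\frac12\sum_{\ell}|\ell|\,d\alpha(\ell),$$
the summation running over all edges $\ell$, with $\alpha(\ell)$ and $|\ell|$ the dihedral angle and the length as in formula~\eqref{eq1} (see, e.g., \cite{Ra06}). If $P$ happened to be convex this would finish the proof at once: a deformation preserving every dihedral angle has $d\alpha(\ell)=0$ for each $\ell$, hence $dV=0$. The real content of the theorem is that $P$ need be neither convex nor embedded, so the whole effort goes into giving a meaning to ``the volume bounded by $P$'' and into justifying Schl\"afli's formula for such $P$.

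First I would treat the embedded case. Fix a value $r_0$ of the deformation parameter. I triangulate each planar face of $P$ using its own vertices only (possible for an arbitrary simple, possibly nonconvex, polygon) and then triangulate the compact solid $\Omega$ bounded by $P$ into finitely many hyperbolic tetrahedra $T_1,\dots,T_m$ having these boundary triangles as faces; choosing the $T_i$ nondegenerate at $r_0$, the same combinatorial triangulation stays valid, and $V=\sum_i\operatorname{vol}(T_i)$ stays equal to the volume of $\Omega$, for all $r$ in a neighbourhood of $r_0$. Then I would write Schl\"afli's formula for each $T_i$ and add the results, collecting the terms edge by edge:
$$dV=-\frac12\sum_{\ell}|\ell|\,d\!\Bigl(\textstyle\sum_{i:\,\ell\subset T_i}\theta_{\ell,i}\Bigr),$$
where $\theta_{\ell,i}$ denotes the dihedral angle of $T_i$ along $\ell$. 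The inner sum is a total angle around $\ell$ that I evaluate geometrically: it equals $2\pi$ when $\ell$ lies in the interior of $\Omega$; it equals $\pi$ when $\ell$ is an auxiliary edge lying in the interior of a face of $P$ (two coplanar boundary triangles filling a flat angle on the solid side); and it equals the genuine dihedral angle $\alpha(\ell)$ of $P$ when $\ell$ is an edge of $P$. In the first two cases the total angle is the constant $2\pi$ or $\pi$, so its differential vanishes; in the last case $d\alpha(\ell)=0$ because the deformation preserves all dihedral angles. Thus every summand is zero, $dV=0$ near $r_0$, and, $r_0$ being arbitrary, $V$ is constant along the whole deformation.

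The step I expect to be the main obstacle is the passage to surfaces that are merely oriented and may self-intersect, where ``the volume bounded by $P$'' must be read as the algebraic volume. Here one replaces the genuine triangulation of $\Omega$ by a signed cone decomposition: triangulate the abstract oriented surface, realise the triangles in $\mathbb{H}^3$, cone each to a fixed point $O$, and set $V=\sum_i\varepsilon_i\operatorname{vol}(O\ast t_i)$ with signs $\varepsilon_i=\pm1$ prescribed by the orientation. The difficulty is purely in the bookkeeping of Schl\"afli's formula for this signed sum: one must check that, with the correct sign conventions and with $O$ in generic position, the contribution of every edge of $P$ again reduces to $|\ell|\,d\alpha(\ell)$, while the edges $O V_k$ emanating from the apex contribute a total differential that cancels. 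Once these signs are organised the same vanishing argument applies verbatim and yields $dV=0$ in full generality.
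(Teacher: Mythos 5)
Your proof is correct and follows essentially the same route as the paper: both reduce the statement immediately to the Schl\"afli differential formula, under which a deformation with $d\alpha(\ell)=0$ for every edge forces $dV=0$. The only difference is that you re-derive the formula for nonconvex and self-intersecting polyhedra by triangulation and signed cone decomposition, whereas the paper simply invokes the general version of the Schl\"afli formula established in the cited work of Souam \cite{So04}.
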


\begin{proof}
Let $\{P_r\}_{r\in I}$ be a smooth
deformation of $P$ leaving the dihedral angles fixed
and let $\ell^j_r$, $j=1,\dots, J$, stand for
the edges of the polyhedron $P_r$.
If we denote the length of the edge $\ell^j_r$
by $|\ell^j_r|$ and the dihedral angle
of $P_r$ attached to $\ell^j_r$ by $\alpha^j_r$
then the classical Schl{\"a}fli differential formula \cite{So04}
reads as follows
\begin{equation}\label{eq3}
\frac{d}{dr}\mbox{vol\,}P_r= 
-\frac12 \sum_j \bigl|\ell^j_r\bigr| \frac{d}{dr}\alpha^j_r.
\end{equation} 
This completes the proof of Theorem \ref{t2},
since,  by assumption, $\frac{d}{dr}\alpha^j_r=0$
for all $j=1,\dots, J$.
\end{proof}

\begin{remark}
The statement and the proof of Theorem~\ref{t2}
hold true both for polyhedra with or without 
selfintersections. For more details, including 
a formal definition of a polyhedron with selfintersections,
the reader is
referred to the theory of flexible polyhedra~\cite{Sa11}. 
\end{remark}

\begin{thrm}\label{t3}
In the hyperbolic 3-space $\mathbb{H}^3$,
there exists a tiling composed of congruent polyhedral tiles
that possesses a nontrivial continuous deformation
such that, in the course of deformation,  
the dihedral angles of every tile are left fixed, 
and the union of the deformed tiles produces a tiling 
composed of congruent polyhedral tiles again.
\end{thrm}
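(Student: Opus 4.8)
The plan is to re-use the deformable cap $N_r$ constructed in the proof of Theorem~\ref{t1} as a purely local, dihedral-angle-preserving gadget, and to install congruent copies of it, all governed by a single parameter $r$, into a carefully chosen tiling of $\mathbb H^3$. Recall the three features of $N_r$ that make it suitable: it is a disk-homeomorphic selfintersection-free surface spanning the fixed combinatorial quadrilateral that straddles the edge $XY$ of dihedral angle $2\beta$; every dihedral angle of $N_r$, including the angles along which it is grafted onto the adjacent flat faces, is independent of $r$; and $N_r$ shrinks to a point as $r\to(\pi/2-\beta)-0$ while still deforming nontrivially on every subinterval (Steps~V--VII). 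Consequently, grafting one copy of $N_r$ in place of the two-triangle ``tent'' over a segment of such an edge changes a tile only inside an arbitrarily small neighbourhood of that edge and leaves every dihedral angle of the tile unchanged throughout the deformation.

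First I would fix a tiling $\mathcal T$ of $\mathbb H^3$ by congruent copies of a polytope $T_0$, together with its (discrete, cocompact) symmetry group $\Gamma\subset\mathrm{Isom}(\mathbb H^3)$, chosen so that $T_0$ possesses an edge of dihedral angle $2\beta$ along which the grafting of Step~V can be performed. Here I would exploit the freedom left in Theorem~\ref{t1}: the angle $2\beta$ is the dihedral angle of the auxiliary tetrahedron at $XY$ and may be prescribed in advance, so $\beta\in(0,\pi/2)$ can be tuned to a value realised as a dihedral angle of some polytope that tiles $\mathbb H^3$. I would then graft an isometric copy of $N_r$ over a $\Gamma$-orbit of such edges, equivariantly with respect to $\Gamma$; the lens-shaped solid removed from one tile by the indentation is exactly the solid by which the neighbouring tiles protrude through the corresponding windows, so no gaps or overlaps are introduced and the modified interfaces are common to the tiles they separate.

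Because the grafting is $\Gamma$-equivariant and $\Gamma$ is independent of $r$, every modified tile is the image under $\Gamma$ of a single prototile $P_r$, so all tiles are mutually congruent for each fixed $r$, and they remain so throughout the deformation. For every $r$ the dihedral angles of each tile are fixed: the edges of a tile away from the grafts are untouched, while along the edges of the grafted copies of $N_r$ the dihedral angles are $r$-independent by Theorem~\ref{t1}. The grafted caps shared by neighbouring tiles deform in unison as $r$ varies, so the indentations and the complementary protrusions continue to match and the deformed tiles keep covering $\mathbb H^3$ with pairwise disjoint interiors, i.e. they still form a tiling by congruent tiles. Finally, the deformation is nontrivial because the spatial diameter of $N_r$ (for instance the distance $2s(r)$ of Step~VI) is a nonconstant function of $r$.

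The hard part will be the very construction of a tiling for which all of this is globally consistent: one has to choose $\mathcal T$, the grafting edge-orbit, and the direction of each indentation so that the solids exchanged between tiles fit together without any extra intersection, and so that the whole grafting pattern is invariant under a group acting transitively on tiles. I expect to resolve this by fixing $\mathcal T$ and $\Gamma$ first and only afterwards choosing the remaining data of Theorem~\ref{t1}, and by taking $r$ close to $\pi/2-\beta$ so that each cap is so small that it stays strictly inside the union of the tile and its immediate neighbours, away from all other tiles and from every edge of $T_0$. This reduces the theorem to exhibiting one tiling of $\mathbb H^3$ by a congruent polytope carrying such a $\Gamma$-symmetric family of graftable edges --- precisely the point at which A.V.~Kuz'minykh's construction is invoked.
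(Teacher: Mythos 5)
Your overall strategy --- install congruent, single-parameter copies of the deformable gadget from Theorem~\ref{t1} equivariantly into a tiling by congruent polytopes and let one parameter $r$ drive everything --- is the right one, but the step where you install the gadget is exactly where the argument breaks down, and you leave it unresolved. You propose to graft $N_r$ over an \emph{edge} of the prototile having dihedral angle $2\beta$, replacing the two-triangle tent over a segment $x_ry_r$ of that edge. In a tiling of $\mathbb{H}^3$ an edge is in general incident to three or more tiles (the dihedral angles around it sum to $2\pi$), so the lens-shaped solid you carve out of one tile along that edge abuts the edge itself and would have to be partitioned among \emph{all} the remaining tiles around it --- or, if every tile is indented symmetrically, a solid neighbourhood of the segment $x_ry_r$ is left uncovered. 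The assertion that ``the lens-shaped solid removed from one tile is exactly the solid by which the neighbouring tiles protrude'' is the crux of the whole theorem and is simply not true for edge-grafting without substantial extra work that you do not supply. You also impose the unnecessary constraint that the prototile realise the dihedral angle $2\beta$, and the one concrete object the theorem needs --- an explicit tiling of $\mathbb{H}^3$ by congruent polytopes --- is deferred to an unspecified ``Kuz'minykh construction'' and never produced.

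The paper sidesteps all of this by performing the surgery in the \emph{interior of a face}, where exactly two tiles meet. It takes the B{\"o}r{\"o}czky tiling of $\mathbb{H}^3$ by congruent cells $\varkappa$, generated from one cell by isometries $\varphi$, $\psi$, $\chi$; it cuts out of the interior of a face of $\varkappa$ (far from all vertices and edges) a triangle $\Delta$ congruent to the face $WYZ$ of the \emph{entire} polyhedron $P=P_r$ of Theorem~\ref{t1}; it glues in the disk $\Sigma$ obtained from $P$ by deleting $WYZ$; and it simultaneously performs the image surgery with $\psi(\Delta)$ and $\psi(\Sigma)$, so that the bump of each tile is precisely the dent of the adjacent copy under the generating isometry. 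No dihedral angle of the tile is disturbed, no condition on the tile's own dihedral angles is required, and each graft involves only two tiles. Kuz'minykh's contribution is exactly this face-interior bump/dent trick, not a source of ``graftable edges''; the underlying tiling is B{\"o}r{\"o}czky's. To repair your proof, replace the edge-grafting by this face surgery and name the B{\"o}r{\"o}czky tiling explicitly.
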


\begin{proof}
Our proof makes use of the so-called 
B{\"o}r{\"o}czky tiling
of $\mathbb{H}^3$ by congruent polyhedra and,
for the reader's convenience, we start with a short 
description of this tiling.
Figure~\ref{fig3} illustrates the construction of 
the B{\"o}r{\"o}czky tiling
in the upper half-space model.
\begin{figure}\label{fig3}
\begin{center}
\includegraphics[width=0.7\textwidth]{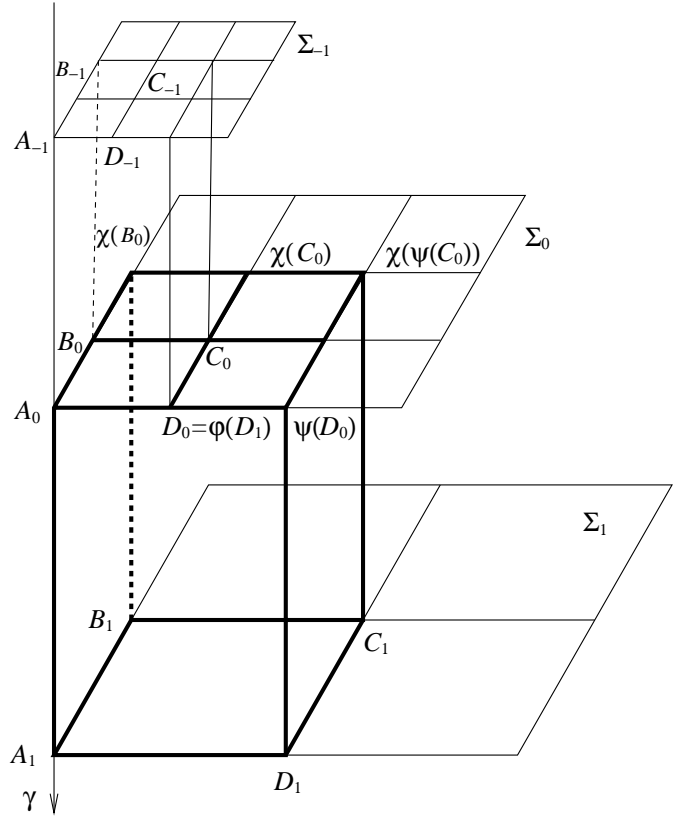}
\end{center}
\caption{Constructing the B{\"o}r{\"o}czky tiling 
in the upper half-space model of the hyperbolic  3-space.
The tile $\varkappa$ is shown by bold lines}
\end{figure}

Suppose the hyperbolic  3-space has curvature $-1$ 
and let $\Sigma_0$ be an horosphere.
It is well-known that $\Sigma_0$ is isometric to 
the Euclidean plane.
Let $\pi_0$ be an edge-to-edge tiling of $\Sigma_0$
with pairwise equal geodesic squares with edge length 1.
Fix one of the squares of $\pi_0$ and 
denote its vertices by $A_0$, $B_0$, $C_0$, and $D_0$ 
as shown in Figure~\ref{fig3}. 
Let $\gamma$ be an oriented line through $A_0$
that is orthogonal to $\Sigma_0$.
Starting from $A_0$, place points $A_k$,  $k\in\mathbb Z$,
on $\gamma$ such that $A_k$ precedes $A_{k+1}$ on 
the oriented line $\gamma$
and the hyperbolic distance
between $A_k$ and $A_{k+1}$ is equal to
$\ln 2$ for all $k\in\mathbb Z$.
Through every point $A_k$ draw an horocycle $\Sigma_k$ 
orthogonal to~ $\gamma$.

Let $\varphi$ be an orientation preserving 
isometric mapping of $\mathbb{H}^3$ onto itself 
that maps the point $A_0$ into the point $A_{-1}$,
maps the line $\gamma$ onto itself and 
maps the plane through $\gamma$ and $B_0$ onto itself.
Starting from the tiling $\pi_0$ of the horosphere $\Sigma_0$, 
define the tiling $\pi_k$ of the horosphere $\Sigma_k$
by putting $\pi_k=\varphi(\pi_{k+1})$ for all $k\in\mathbb Z$.
Note that  $A_{k}=\varphi(A_{k+1})$  for all $k\in\mathbb Z$.
By definition, put $B_{k}=\varphi(B_{k+1})$, 
$C_{k}=\varphi(C_{k+1})$, and $D_{k}=\varphi(D_{k+1})$. 

Let $\psi$ (respectively, $\chi$) be an orientation preserving 
isometry of $\mathbb{H}^3$ onto itself 
that maps the square $A_0B_0C_0D_0$ onto its neighbour 
in the tiling $\pi_0$ in such a way that
$\psi(A_0)=D_0$ and $\psi(B_0)=C_0$ 
(respectively, $\chi(A_0)=B_0$ and $\chi(D_0)=C_0$).

A cell of the B{\"o}r{\"o}czky tiling
is a (nonconvex solid) polyhedron with the following
13 vertices: 
$A_1$, $B_1$, $C_1$, $D_1$, 
$A_0$, $B_0$, $C_0$, $D_0$, 
$\psi(C_0)$, $\psi(D_0)$, $\chi(B_0)$, 
$\chi(C_0)$, and $\chi(\psi(C_0))=\psi(\chi(C_0))$.
Its combinatorial structure is shown in Figure~\ref{fig3} 
with bold lines.
Denote this polyhedron by $\varkappa$.

The main observation, allowing to build 
the  B{\"o}r{\"o}czky tiling is that, 
since the hyperbolic distance between the points $A_0$ and 
$A_1$ is equal to $\ln 2$ and the curvature of the space 
is equal to $-1$, 
the Euclidean distance between the points 
$A_0$ and $\psi(D_0)$ (measured in the Euclidean plane $\Sigma_0$) 
is twice the Euclidean distance between the points $A_0$ 
and $D_0$ and, thus, twice the distance  between the points 
$A_1$ and $D_1$.
Similarly, the Euclidean distance between the points 
$A_0$  and $\chi(B_0)$ 
is twice the Euclidean distance between 
the points $A_0$ and $B_0$.
If we observe now that, for every $k\in\mathbb Z$,
$\varphi$ maps the tiling $\pi_k$ of the horosphere $\Sigma_k$
onto the tiling $\pi_{k-1}$ of the horosphere $\Sigma_{k-1}$ and
both $\psi$ and $\chi$ map
the tiling $\pi_k$ of the horosphere $\Sigma_k$
onto itself, we can complete the description of the 
 B{\"o}r{\"o}czky tiling as follows.

We apply iterations $\varphi^k$, $k\in\mathbb Z$,
of the hyperbolic isometry $\varphi$ to the polyhedron $\varkappa$
and get a sequence of polyhedra $\varphi^k(\varkappa)$
whose 9 `upper' vertices lie on the horosphere $\Sigma_k$ 
(and belong to the set of the vertices of the tiling $\pi_k$) 
and 4 `bottom' vertices lie on the horosphere $\Sigma_{k+1}$
(and belong to the set of the vertices of the tiling $\pi_{k+1}$).
Then we fix  $k\in\mathbb Z$ and apply iterations 
$\psi^p$, $p\in\mathbb Z$, 
and $\chi^q$, $q\in\mathbb Z$,  
of the isometries $\psi$ and $\chi$ to the polyhedron 
$\varphi^k(\varkappa)$. 
As a result we get a tiling of a
`polyhedral layer' with vertices on $\Sigma_k$ and $\Sigma_{k+1}$. 
These `polyhedral layers' fit together and produce 
the  B{\"o}r{\"o}czky tiling of the whole 3-space.

In short, we can say that the B{\"o}r{\"o}czky tiling 
is produced from the cell $\varkappa$ by isometries 
$\varphi$, $\psi$, and $\chi$.

For more details about the B{\"o}r{\"o}czky tiling
the reader may consult \cite{DF10} and references 
given there.

We now turn to the proof of Theorem 3 directly.
Let's construct a polyhedral surface $P\subset\mathbb S^3$ 
that is described in the statement of Theorem 1
such that all dimensions of $P$ are sufficiently
small in comparison with the size of the polyhedron~$\varkappa$. 
For the vertices of $P$, we 
use the notations from Step V of the proof
of Theorem \ref{t1} (or, identically, from Figure~\ref{fig2}). 

On the face $A_1A_0B_0\chi(B_0)B_1$ of the boundary
$\partial\varkappa$ of the solid polyhedron $\varkappa$,
find a (hyperbolic) triangle
$\Delta$ that is congruent to the triangle $WYZ$
of the polyhedral surface $P$ (see Figure~\ref{fig2}) and
lies sufficiently far from all the vertices of~$\varkappa$.
Remove  $\Delta$ from the polyhedral
surface $\partial\varkappa$ and glue the hole obtained by an
isometric copy $\Sigma$ of the disk-homeomorphic 
polyhedral surface remaining after the removal 
of $WYZ$ from the polyhedral surface~$P$.
From the resulting sphere-homeomorphic surface remove
a triangle $\psi(\Delta)$ and 
glue the hole obtained by 
the disk-homeomorphic surface $\psi(\Sigma)$.
Denote the resulting 
sphere-homeomorphic polyhedral surface 
by~$\partial\overline{\varkappa}$.

The compact part $\overline{\varkappa}$
of the hyperbolic  3-space bounded by the 
sphere-homeo\-morphic polyhedral surface 
$\partial\overline{\varkappa}$ is the cell of the 
tiling whose existence is proclaimed in Theorem~\ref{t3}.
The tiling itself
is produced from the cell $\overline{\varkappa}$ by isometries 
$\varphi$, $\psi$, and $\chi$ precisely in the same way
as these isometries produce the B{\"o}r{\"o}czky tiling 
from its cell $\varkappa$.
The nontrivial continuous deformation that
preserves the dihedral angles of the cell
is produced by the corresponding deformation of $P$.
\end{proof}

\section{Polyhedra in the spherical 3-space}

In the spherical 3-space $\mathbb S^3$ we may prove 
the same statements as in the hyperbolic  3-space.
For the reader's convenience, below we formulate 
Theorems \ref{t4}--\ref{t6} that hold true in  $\mathbb S^3$
and are similar to 
Theorems \ref{t1}--\ref{t3} and give brief 
comments on their proofs.

\begin{thrm}\label{t4}
In the spherical 3-space,
there exists a nonconvex sphere-homeomor\-phic 
polyhedron $P$ with the following properties:
\begin{enumerate}[(i)]
\item $P$ has no selfintersections;
\item $P$ admits a continuous family of 
nontrivial deformations which leave the dihedral 
angles fixed;
\item the surface area of $P$ is nonconstant
during the deformation;
\item the total mean curvature  of $P$ is nonconstant
during the deformation;
\item the Gauss curvature of some vertex of $P$
is nonconstant.
\end{enumerate}
\end{thrm}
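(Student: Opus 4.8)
The plan is to mimic the nine-step construction used in the proof of Theorem~\ref{t1}, replacing hyperbolic trigonometry by its spherical counterpart throughout. First I would construct, on a geodesic 2-sphere $S\subset\mathbb S^3$, a continuous family of antiparallelograms $a_rb_rc_rd_r$ circumscribed around concentric circles $C_r$ of radius $r$ and having $r$-independent angles, exactly as in Steps~I and~II. These two steps are purely intrinsic to $S$, which is isometric to a round Euclidean 2-sphere, so they transfer verbatim.

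The only essential change occurs in Step~III, where the ambient trigonometry enters. Building the cone $K_r$ with apex at the center $O$ of $S$ and intersecting it with a plane $\pi(s)$ at distance $s$ from $O$, the spherical right-triangle relations (cf.\ \cite{Ra06}) give
\begin{equation}\label{eq4}
\cos\beta=\cos s(r)\,\sin r
\end{equation}
in place of~\eqref{eq2}. Solving~\eqref{eq4} for $s(r)$ again yields a continuous function rendering the dihedral angle $\beta$ independent of $r$, and Steps~IV and~V proceed unchanged: one forms the Bricard-type octahedral surface $M_r$, excises two triangles to obtain the disk-homeomorphic surface $N_r$, and grafts an isometric copy of $N_r$ into a large spherical tetrahedron $T$ by Connelly's trick, producing a continuous family $\{P_r\}$ of nonconvex, selfintersection-free, sphere-homeomorphic surfaces with $r$-independent dihedral angles.

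For the nonconstancy assertions (iii)--(v) I would follow Steps~VI--IX, with one adjustment dictated by~\eqref{eq4}. Since $\cos s\le1$, the new relation forces $\sin r\ge\cos\beta$, so the admissible interval lies on the side $r>\pi/2-\beta$ and the degeneration $s(r)\to0$ now occurs as $r\to(\pi/2-\beta)+0$ rather than from below. In this limit every edge of $N_r$ shrinks to zero, so the excess of the surface area (respectively the total mean curvature) of $P_r$ over that of $T$ stays strictly positive yet tends to zero; each is therefore nonconstant and, being analytic in $r$, nonconstant on all of the interval $I$. The Gauss-curvature computation at $y_r$ again reduces to showing that the spherical distance $a_rb_r$ is nonconstant, which follows from the same rigidity argument: equal values $a_rb_r=a_tb_t$ would force the two circumscribed antiparallelograms to be congruent, hence $r=t$.

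The main obstacle I anticipate is global rather than algebraic. Since $\mathbb S^3$ is compact of diameter $\pi$, the phrase ``$T$ sufficiently large'' must be read with care: one has to verify that the entire configuration---the sphere $S$, the cone $K_r$, the plane $\pi(s(r))$, and the ambient tetrahedron $T$ carrying dihedral angle $2\beta$ at one edge---actually fits inside $\mathbb S^3$ without forcing antipodal coincidences or degeneracies. Concretely, I expect to restrict $I$ to a small subinterval of $(\pi/2-\beta,\pi/2)$ and to choose $\beta$ so that a genuine spherical tetrahedron with the prescribed dihedral angle $2\beta$ exists while $s(r)$, the edge $2s(r)$ of $T$, and the triangle $WYZ$ all remain well below the spherical diameter. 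Once these size constraints are secured, every remaining step of the hyperbolic proof carries over mutatis mutandis.
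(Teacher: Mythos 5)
Your proposal follows exactly the route the paper takes: its proof of Theorem~\ref{t4} consists of the single remark that one repeats the proof of Theorem~\ref{t1} with hyperbolic trigonometry replaced by spherical trigonometry. You in fact supply more detail than the paper does --- the correct spherical analogue $\cos\beta=\cos s(r)\sin r$ of~\eqref{eq2}, the resulting reversal of the limit direction to $r\to(\pi/2-\beta)+0$, and the size constraints imposed by the compactness of $\mathbb S^3$ --- all of which are sound.
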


\begin{proof}
Up to an obvious replacement of theorems of hyperbolic trigonometry
by the corresponding theorems of spherical trigonometry,
the proof is similar to the proof of Theorem \ref{t1}
\end{proof}

\begin{thrm}\label{t5}
For every compact bounary-free 
oriented polyhedral surface $P$
in the spherical 3-space and every smooth
deformation that leaves the dihedral angles of $P$ fixed,
the volume bounded by $P$ remains constant 
in the course of deformation.
\end{thrm}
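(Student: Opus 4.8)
The plan is to imitate the proof of Theorem \ref{t2} verbatim, replacing the hyperbolic form of the Schl\"afli differential formula by its spherical counterpart. First I would fix a smooth deformation $\{P_r\}_{r\in I}$ of $P$ that leaves all dihedral angles fixed, and let $\ell^j_r$, $j=1,\dots,J$, denote the edges of the polyhedral surface $P_r$, writing $|\ell^j_r|$ for the length of $\ell^j_r$ and $\alpha^j_r$ for the dihedral angle of $P_r$ attached to $\ell^j_r$, exactly as in Theorem \ref{t2}.

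The only ingredient that changes is the curvature-dependent constant in the Schl\"afli formula. In a three-dimensional space of constant curvature $K$ the formula reads
\[
2K\,\frac{d}{dr}\mbox{vol\,}P_r = \sum_j \bigl|\ell^j_r\bigr|\,\frac{d}{dr}\alpha^j_r .
\]
For $K=-1$ this is precisely equation \eqref{eq3}; for the spherical $3$-space one takes $K=+1$, obtaining
\[
\frac{d}{dr}\mbox{vol\,}P_r = \frac12 \sum_j \bigl|\ell^j_r\bigr|\,\frac{d}{dr}\alpha^j_r .
\]
Since by hypothesis $\frac{d}{dr}\alpha^j_r=0$ for every $j=1,\dots,J$, the right-hand side vanishes identically, so $\mbox{vol\,}P_r$ is constant in $r$, which is the assertion.

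The single point that demands care — and which I would regard as the (very mild) main obstacle — is to cite the Schl\"afli formula with the correct sign and the correct curvature factor, because the hyperbolic and spherical cases differ precisely through the sign of $K$. Once the spherical version is invoked in the form displayed above (see \cite{So04}), the conclusion is immediate, and, just as in the remark following Theorem \ref{t2}, the argument applies equally to polyhedral surfaces with or without selfintersections, since the Schl\"afli formula holds in either setting.
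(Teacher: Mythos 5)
Your proposal is correct and follows exactly the paper's own argument: the paper likewise derives Theorem~\ref{t5} from the spherical Schl\"afli formula, obtained from equation~\eqref{eq3} by flipping the sign of the right-hand side, and concludes that the volume derivative vanishes since all $\frac{d}{dr}\alpha^j_r=0$. Your curvature-$K$ formulation reproduces the same sign convention, so the two proofs coincide.
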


\begin{proof}
The result follows directly from the 
classical Schl{\"a}fli differential formula 
for the spherical 3-space \cite{So04},
which may be obtained from the formula~\eqref{eq3}
if we multiplay the right-hand side by~$-1$. 
\end{proof}

\begin{remark}
Recall an open problem: 
\textit{Prove that if all dihedral angles of a simplex in 
the spherical 3-space $\mathbb S^3$ are rational 
multiples of~$\pi$ then the 
volume of this simplex is a rational multiple of $\pi^2$},
see \cite{DS00}, \cite{Du12}.\footnote{In Section 18.3.8.6 of 
the well known book \cite{Be09} the reader may find a
hypothesis that this problem should be solved in a negative form.
We just want to warn the reader about a typo
which may obscure: the last letter $\pi$
in that Section 18.3.8.6 should be replaces by $\pi^2$.} 
If one be interested in the study of a similar problem 
not only for simplices but for all polyhedra,
he may be tempted to construct a polyhedron in $\mathbb S^3$, 
with all dihedral angles being rational multiples of $\pi$, 
admitting continuous deformations that preserve its 
dihedral angles but change its volume. 
In this case, the volume, being a continuous function, 
takes every value in some interval of real numbers and,
among others, takes values that are not rational multiples of $\pi^2$.
Hence, this argument, if it is correct, will easily result
in a negative solution to the above problem extended to all polyhedra.
Nevertheless, Theorem \ref{t5} shows that this argument is not 
applicable because the volume is necessarily constant.
\end{remark}

\begin{thrm}\label{t6}
In the spherical 3-space $\mathbb S^3$,
there exists a tiling composed of congruent polyhedral tiles
that possesses a nontrivial continuous deformation
such that, in the course of deformation,  
the dihedral angles of every tile are left fixed, 
and the union of the deformed tiles produces a tiling 
composed of congruent polyhedral tiles again.
\end{thrm}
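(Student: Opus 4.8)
The plan is to follow the proof of Theorem~\ref{t3} almost verbatim, with two substitutions. First, since the B{\"o}r{\"o}czky tiling is built from horospheres and is therefore unavailable in $\mathbb{S}^3$, I would replace it by a tiling of $\mathbb{S}^3$ coming from a finite group $\Gamma$ of isometries that possesses a polyhedral fundamental domain $\varkappa$; for concreteness one may take one of the regular tilings of $\mathbb{S}^3$ associated with a regular convex $4$-polytope (for example the tiling by $120$ dodecahedral cells or by $600$ tetrahedral cells), so that the cells are congruent and as small as desired. Second, I would replace the flexible polyhedron of Theorem~\ref{t1} by its spherical counterpart supplied by Theorem~\ref{t4}.

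Concretely, I would first fix a face $F$ of $\varkappa$ together with the element $g\in\Gamma$ that pairs it, in the tiling, with another face $g(F)$ of $\varkappa$; in the regular tilings above such a pairing is realized by an honest rotation, so that $g(F)\neq F$ and no symmetry of $F$ is imposed on what we are allowed to glue there. Next I would invoke Theorem~\ref{t4} to produce a spherical flexible polyhedron $P$ of diameter small in comparison with the inradius of $\varkappa$, and I would single out a triangular face $WYZ$ of $P$ that is left undisturbed by the flex (the analogue of the face $WYZ$ in Step~V of the proof of Theorem~\ref{t1}); thus $\partial(WYZ)$ is a fixed planar geodesic triangle, while the complementary disk $\Sigma=P\setminus WYZ$ carries the whole nontrivial deformation. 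On $F$, far from the edges and vertices of $\varkappa$, I would choose a geodesic triangle $\Delta$ congruent to $WYZ$, excise it from $\partial\varkappa$, and glue in an isometric copy of $\Sigma$; on the paired face I would excise $g(\Delta)$ and glue in the copy $g(\Sigma)$. Because the decoration is thereby made $g$-equivariant and the footprint $\partial\Delta$ does not move during the deformation, the resulting sphere-homeomorphic cell $\overline{\varkappa}$ still tiles $\mathbb{S}^3$ under the \emph{same} group $\Gamma$: along every shared wall the two adjacent tiles present one and the same bumped surface, which protrudes into one of them by exactly the amount it is carved out of the other, so that no overlaps and no gaps appear.

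Finally I would let the polyhedron $P$ run through the flex of Theorem~\ref{t4} and apply $\Gamma$ throughout. Since $\Gamma$ is held fixed, all tiles remain mutually congruent and, by the construction of $P$, all of their dihedral angles stay constant; the deformation is nontrivial because it already moves the vertices of $P$ inside a single cell. I expect the only genuine difficulty to be the collision analysis: one must guarantee that the flexing bump glued into $F$ never meets the $1$-skeleton of the tiling and never protrudes so far that it leaves the single neighbouring cell across $F$ and penetrates a third tile. Both are arranged by taking $P$ sufficiently small and placing $\Delta$ well inside $F$, exactly as in the embedding trick used in the proof of Theorem~\ref{t3}; the $g$-equivariance then propagates this local, collision-free picture to the whole tiling for every value of the deformation parameter.
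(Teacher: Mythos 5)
Your proposal is correct and uses essentially the same mechanism as the paper: take a finite tiling of $\mathbb{S}^3$ generated by a group of isometries, cut a triangle congruent to the rigid face $WYZ$ out of a face of the cell, glue in the flexing disk $\Sigma$ supplied by Theorem~\ref{t4}, and repeat equivariantly on the paired face so that the bump of each tile is exactly the dent of its neighbour. The only real divergence is the choice of tiling: the paper builds an ad hoc $12$-cell tiling (central projections of the faces of an inscribed cube onto the equatorial $2$-sphere, coned to the two poles) in which the relevant face-pairing is the central symmetry of the shared face $T_1$ about its centre $O_1$, so the bump at $\Delta$ and the compensating dent at $\Delta'$ both sit on the \emph{same} face; your version, with two distinct paired faces $F$ and $g(F)$ of a regular tiling, is instead the exact analogue of the paper's own proof of Theorem~\ref{t3}, where $\Delta$ and $\psi(\Delta)$ lie on two different faces of the B{\"o}r{\"o}czky cell. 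The one assertion you should back up is that the face-pairing element can be chosen with $g(F)\neq F$: an ``honest rotation'' of $\mathbb{S}^3$ can still preserve a face setwise, in which case the glued decoration would have to be symmetric under that rotation. The claim is true for a suitable choice, e.g., the $120$-cell tiling realized as the Voronoi decomposition of the binary icosahedral group acting on the unit quaternions by left (Clifford) translations, which are fixed-point free and hence pair every face of the dodecahedral cell with a distinct one; alternatively, a self-paired face can be handled exactly as the paper does, by placing both $\Delta$ and its image on that face.
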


\begin{proof}
Our arguments are similar to the proof of Theorem \ref{t3}.
Moreover, they are even simpler 
because now we can use a finite partition of~$\mathbb S^3$ 
instead of the B{\"o}r{\"o}czky tiling of 
the hyperbolic  3-space. 

For example, let's use the tiling
of~$\mathbb S^3$ with 12 mutually congruent polyhedra
obtained in the following way.
Let's treat~$\mathbb S^3$ as the standard unit sphere in
$\mathbb R^4$ centered at the origin.
Let $N=(0,0,0,1)$, $S=(0,0,0,-1)$ and $L$ be the subspace 
of $\mathbb R^4$ orthogonal to the vector $(0,0,0,1)$.
By definition, put $\mathcal S=\mathbb S^3\cap L$. 
Then $\mathcal S$ is a unit 2-sphere located in the 3-space $L$.
Let $C\subset L$ be a 3-dimensional cube inscribed in~$\mathcal S$.
Then the images of the 2-faces of $C$ 
under the central projection from the origin 
$(0,0,0,0)$ into the 2-sphere $\mathcal S$ form a tiling
of~$\mathcal S$ with 6 mutually congruent 
spherical convex polygons that we denote by $T_j$, $j=1,\dots, 6$.
Joining each vertex of $T_j$ with $N$ 
we get six mutually congruent spherical convex polyhedra
$T^N_j$, $j=1,\dots,6$.
Similarly, joining each vertex of $T_j$ with $S$ 
we get six mutually congruent spherical convex polyhedra
$T^S_j$, $j=1,\dots,6$.
Obviously, 12 mutually congruent polyhedra $T^N_j$, 
$T^S_j$, $j=1,\dots,6$, tile the spherical 3-space $\mathbb S^3$. 

Observe that, for every $j=1,\dots,6$, the two polyhedra 
$T^N_j$ and $T^S_j$ are centrally symmetric to each other
with respect to the center of symmetry $O_j$ of the
polygon~$T_j$.

Let's construct a polyhedron $P\subset\mathbb S^3$ 
that is described in Theorem \ref{t4}
such that all dimensions of $P$ are sufficiently
small in comparison with the size of~$T^N_1$. 
For the vertices of $P$, we 
use the same notations as at Step V of the proof
of Theorem \ref{t1} (or, equivalently, as on Figure~\ref{fig2}). 

On the polygon $T_1$, find a (spherical) triangle
$\Delta$ that is congruent to the triangle $WYZ$
of the polyhedral surface $P$ (see Figure~\ref{fig2}) and
lies sufficiently far from all the vertices of
$T^N_1$ and from the point $O_1$.
Remove the triangle $\Delta$ from~ $T^N_1$ 
and glue the hole obtained by an
isometric copy $\Sigma$ of the disk-homeomorphic 
polyhedral surface remaining after the removal 
of~ $WYZ$ from~ $P$.
From the resulting sphere-homeomorphic surface remove
a triangle $\Delta'$ that is symmetric to~$\Delta$ 
with respect to the point $O_1$ and 
glue the hole obtained by a
disk-homeomorphic surface that is symmetric to~$\Sigma$
with respect to~$O_1$.
Denote the resulting sphere-homeomorphic surface by
$\overline{T^N_1}$.

Denote by $\overline{T^S_1}$ the sphere-homeomorphic 
surface that is symmetric to $\overline{T^N_1}$ with 
respect to~$O_1$.

Let $\varphi_1:\mathbb S^3\to\mathbb S^3$ be the identity
mapping and, for every $j=2,\dots,6$, 
$\varphi_j:\mathbb S^3\to\mathbb S^3$ be an isometry
such that $\varphi_j(T_1)=T_j$,
$\varphi_j(N)=N$ and $\varphi_j(S)=S$.

Consider the union of 12 sphere-homeomorphic surfaces
$\varphi_j(\overline{T^N_1})$ and
$\varphi_j(\overline{T^S_1})$, $j=1,\dots,6$.
Obviously, these surfaces define a tiling of $\mathbb S^3$
by pairwise congruent polyhedral tiles 
that possess a nontrivial continuous deformation
(which is produced by the corresponding deformation of $P$)
such that, in the course of deformation,  
the dihedral angles of every tile are left fixed, 
and the union of the deformed tiles produces a tiling 
composed of congruent tiles again.
This concludes the proof of Theorem~\ref{t6}.
\end{proof}


\begin{thebibliography}{10}

\bibitem{An70}E.~M. Andreev,
{On convex polyhedra in Lobachevskij spaces} (in Russian).
Mat. Sb., N. Ser. {\bf 81}(1970), 445--478.
An English translation in Math. USSR, Sb. {\bf 10}(1970), 413--440.

\bibitem{Be09}
M. Berger, 
\emph{Geometry. I, II.} Corrected 4th printing.
Universitext. Springer, Berlin, 2009.

\bibitem{Co77}R. Connelly, 
{A counterexample to the rigidity conjecture for polyhedra.}
Publ. math. IHES. {\bf 47}(1977), 333--338.

\bibitem{DF10}N. Dolbilin and D. Frettl{\"o}h,
{Properties of B{\"o}r{\"o}czky tilings in high-dimensional 
hyperbolic spaces.}
Eur. J. Comb. {\bf 31}(2010), no. 4,  1181--1195.

\bibitem {DS00}J.~L. Dupont and C.-H. Sah,
\emph{Three questions about simplices in spherical and hyperbolic 3-space.} 
In: The Gelfand Mathematical Seminars, 1996--1999, Birkh{\"a}user, Boston, 2000,
pp. 49--76.

\bibitem{Du12}J.~L. Dupont,
{What is \dots a Scissors Congruence?}
Notices Am. Math. Soc. {\bf 59}(2012), no. 9, 1242--1244.

\bibitem{MM11}R. Mazzeo and G. Montcouquiol, 
{Infinitesimal rigidity of cone-manifolds and the Stoker problem 
for hyperbolic and Euclidean polyhedra.}
J. Differential Geom. {\bf 87}(2011), no. 3, 525--576.

\bibitem{Ra06}
J.~G. Ratcliffe, 
\emph{Foundations of hyperbolic manifolds.} 2d ed. 
Graduate Texts in Mathematics, {\bf 149}. 
Springer, New York, 2006.

\bibitem{Sa11}I.~Kh. Sabitov, 
{Algebraic methods for the solution of polyhedra}
(in Russian). Uspekhi Mat. Nauk {\bf 66}(2011), no. 3, 3--66.
An English translation in Russian Math. Surveys 
{\bf 66}(2011), no. 3, 445--505. 

\bibitem{So04}R. Souam, 
{The Schl{\"a}fli formula for polyhedra and piecewise 
smooth hypersurfaces.}
Differential Geom. Appl. {\bf 20}(2004), no. 1, 31--45.

\bibitem{St68}J.~J. Stoker, 
{Geometrical problems concerning polyhedra in the large.} 
Comm. Pure Appl. Math. {\bf 21}(1968), 119--168. 
\end{thebibliography}
\end{document}